\documentclass{amsart}
\usepackage {amsmath, amssymb, amscd, mathrsfs, url,pinlabel,hyperref, verbatim}
\usepackage[text={6.3in,8.2in},centering,letterpaper,dvips]{geometry}
\usepackage{color,multirow,dcpic,latexsym,pictexwd,graphicx,epstopdf}
\usepackage{comment}

\newtheorem {theorem}{Theorem}

\newtheorem {corollary}[theorem]{Corollary}

\theoremstyle{remark}

\newtheorem {remark}[theorem]{Remark}

\numberwithin{equation}{section}

\newcommand{\KCH}{\widetilde{HC}_0}
\newcommand{\R}{\mathfrak{R}}
\newcommand{\Z}{\mathbb{Z}}
\newcommand{\Tpq}{T_{p,q}}
\newcommand{\pihat}{\hat{\pi}}

\author{Cameron Gordon and Tye Lidman}
\date{}
\title{Knot contact homology detects cabled, composite, and torus knots}

\begin{document}

\begin{abstract}
Knot contact homology is an invariant of knots derived from Legendrian contact homology which has numerous connections to the knot group.  We use basic properties of knot groups to prove that knot contact homology detects every torus knot.  Further, if the knot contact homology of a knot is isomorphic to that of a cable (respectively composite) knot, then the knot is a cable (respectively composite).
\end{abstract}

\maketitle

Associated to a knot $K \subset \mathbb{R}^3$, knot contact homology is a combinatorial invariant which arises from constructions in contact and symplectic geometry \cite{KCH1, KCH2, FKCH, EENS}.  More specifically, the conormal bundle $\Lambda_K$ of $K$ in the unit cotangent bundle $ST^*\mathbb{R}^3$ is a Legendrian submanifold, and knot contact homology comes from the Legendrian contact homology of $\Lambda_K$.  This invariant is able to distinguish mirrors and mutant pairs, determines the Alexander polynomial, and detects the unknot \cite{FKCH}.  
In fact, it is still open as to whether knot contact homology is a complete knot invariant.\footnote{Degree zero knot contact homology is isomorphic to degree zero string homology, an object defined by Cieliebak, Ekholm, Latschev, Ng in \cite{CELN}.  It is shown there that a variant of degree zero string homology is a complete invariant.}  

In this paper, we will show that knot contact homology detects each of the torus knots as well as being a cable or composite.  Here, we will work with a version of the fully noncommutative degree zero knot contact homology with $U=1$, which we denote by $\KCH(K)$; for comparison with other appearances in the literature, this is denoted by  $H^{\operatorname{contact}}_0(K)$ in \cite{CELN} and $\widetilde{HC}_0(K)|_{U=1}$ in \cite{TopKCH}. 

\begin{theorem}\label{thm:main}
Let $K$ be an oriented knot in $\mathbb{R}^3$ and let $\Tpq$ denote the $(p,q)$-torus knot.  If $\KCH(K) \cong \KCH(\Tpq)$, then $K$ is isotopic to $\Tpq$.  
\end{theorem}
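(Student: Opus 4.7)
\medskip

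\noindent\textbf{Proof proposal.}
The plan is to reduce the hypothesis $\KCH(K) \cong \KCH(\Tpq)$ to a statement about fundamental groups equipped with peripheral structure, and then invoke classical results about knots with torus knot groups. The crucial enabling input is the cord algebra description of $\KCH$ from \cite{CELN,TopKCH}: the fully noncommutative degree zero knot contact homology at $U=1$ is a quotient of the group ring of $\pi_1(S^3 \setminus K)$ built from cords of $K$, and it is known to remember the peripheral data, namely the meridian $\mu$ and longitude $\lambda$. Accordingly, an isomorphism $\KCH(K) \cong \KCH(\Tpq)$ should yield an isomorphism of knot groups carrying meridian to meridian and longitude to longitude.

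Given this, the proof splits into two topological steps. First, the torus knot group $\pi_1(S^3 \setminus \Tpq) = \langle a,b \mid a^p = b^q \rangle$ has infinite cyclic center generated by $a^p = b^q$, so the knot group of $K$ also has nontrivial center. By the classical Burde--Zieschang theorem, a nontrivial knot in $S^3$ whose group has nontrivial center is a torus knot, and the abstract presentation recovers the unordered pair $\{|p|,|q|\}$; hence $K$ is $\Tpq$ or its mirror as an unoriented knot. Second, to upgrade this to an oriented isotopy, I would use that an oriented knot in $S^3$ is determined by its complement together with the peripheral pair $(\mu,\lambda)$, via Gordon--Luecke combined with Waldhausen's rigidity theorem. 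Since the isomorphism from the first paragraph respects meridian and longitude, we conclude $K \cong \Tpq$ as oriented knots in $\mathbb{R}^3$.

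The main obstacle I foresee is the very first step: verifying that an abstract isomorphism of $\KCH$ really does transport peripheral data, not merely the underlying group. One would have to pin down the classes of $\mu$ and $\lambda$ inside $\KCH$ through canonical, isomorphism-invariant features---for instance, by exploiting centrality in $\pi_1(S^3 \setminus \Tpq)$ or the structural relations in the cord algebra that distinguish $\mu$ from $\lambda$---so that any $\KCH$-isomorphism must carry them accordingly. Once this peripheral rigidity is in place, the remaining topological ingredients (Burde--Zieschang and Gordon--Luecke) are classical and require no further input.
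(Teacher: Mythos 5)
Your first step contains a genuine gap, and it is the crux of the whole problem. Theorem~\ref{thm:kch-iso} identifies $\KCH(K)$ with the ring $\R_K$, which is a \emph{subring of the group ring} $\Z[\pi_K]$, not with the group $\pi_K$ itself. An isomorphism $\KCH(K) \cong \KCH(\Tpq)$ therefore gives only a ring isomorphism $\R_K \cong \R_{\Tpq}$ fixing $\mu$ and $\lambda$; it is not known to induce an isomorphism of knot groups, with or without peripheral structure. Indeed, if your first step could be carried out, then by exactly the Waldhausen/Gordon--Luecke argument you cite, $\KCH$ would be a complete knot invariant --- which the paper explicitly notes is an open question. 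You flag this as ``the main obstacle I foresee,'' but the suggestions you offer for resolving it (pinning down $\mu$ and $\lambda$ inside $\KCH$) address the wrong issue: $\mu$ and $\lambda$ are already preserved by definition of a $\KCH$-isomorphism; the problem is that the group itself is not recovered. The paper's actual argument sidesteps group recovery entirely: it observes that $\phi_{\Tpq} = \mu^{pq}\lambda$ is central in $\pi_{\Tpq}$ (the Seifert fiber class), hence central in $\R_{\Tpq}$, hence $\phi_K = \mu_K^{pq}\lambda_K$ is central in $\R_K$. Applying centrality to the generators $\gamma - \mu_K\gamma$ of $\R_K$ yields $(\mu_K - 1)(\phi_K\gamma - \gamma\phi_K) = 0$ in $\Z[\pi_K]$, and since $\Z[\pi_K]$ has no zero-divisors (local indicability of knot groups plus Higman's theorem), $\phi_K$ is central in $\pi_K$. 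Only then does Burde--Zieschang enter, forcing $K = T_{p',q'}$ with $p'q' = pq$.

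There is a second, smaller gap at the end. Even after establishing that $K$ is a torus knot, your argument does not identify \emph{which} torus knot: the centrality argument only constrains the product $p'q'$, and you have no group isomorphism from which to read off $\{|p|,|q|\}$. The paper closes this by a separate computation: two torus knots with the same $\KCH$ have the same stable $A$-polynomial, and Cornwell's formula shows the stable $A$-polynomial distinguishes $T_{p',q'}$ from $\Tpq$ whenever $p'q' = pq$ but $\pm\{p,q\} \neq \{p',q'\}$. Your proposed route via Gordon--Luecke and Waldhausen would only become available if the group-with-peripheral-structure were recovered, which, as above, is precisely what is not known.
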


Since knot contact homology is an invariant of Legendrian isotopy, we obtain the following.
\begin{corollary}
If $\Lambda_K$ is Legendrian isotopic to $\Lambda_{\Tpq}$ in $ST^*\mathbb{R}^3$, then $K$ is (smoothly) isotopic to $\Tpq$.  
\end{corollary}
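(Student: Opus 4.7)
The plan is to deduce the corollary immediately from Theorem \ref{thm:main} together with the invariance of Legendrian contact homology under Legendrian isotopy. Concretely, if $\Lambda_K$ and $\Lambda_{\Tpq}$ are Legendrian isotopic in $ST^*\mathbb{R}^3$, then the Legendrian contact homology DGAs of the two conormal tori are stable-tame isomorphic. Any functorial operation applied to both sides therefore produces isomorphic outputs: in particular, taking the fully noncommutative degree zero piece and specializing $U = 1$ yields $\KCH(K) \cong \KCH(\Tpq)$. Theorem \ref{thm:main} then gives that $K$ is smoothly isotopic to $\Tpq$, which is the claim.

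The only step that deserves a sentence is confirming that the specific version $\KCH$ used in this paper genuinely inherits invariance from the full DGA; this is standard in the setup of \cite{KCH1, KCH2, EENS, CELN, TopKCH} but should be noted explicitly so that the implication ``Legendrian isotopic conormals $\Rightarrow$ isomorphic $\KCH$'' is in force. I do not anticipate any real obstacle: once that invariance is cited, the corollary is a one-line application of the main theorem, with no extra topological or algebraic input required.
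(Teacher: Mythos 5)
Your proposal matches the paper's argument exactly: the paper derives the corollary in one line from Theorem~\ref{thm:main} via the invariance of knot contact homology under Legendrian isotopy of the conormal torus, which is precisely your route. Your extra remark about checking that the degree zero, $U=1$ specialization inherits invariance from the full DGA is a reasonable point of care but does not change the substance of the argument.
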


Using similar techniques, we will also show 
\begin{theorem}\label{thm:cable-composite}
Let $K$ be an oriented knot in $\mathbb{R}^3$ and let $C_{p,q}(J)$ denote the $(p,q)$-cable of a knot $J$.  If $\KCH(K) \cong \KCH(C_{p,q}(J))$, then $K$ is isotopic to a $(p',q')$-cable of a knot $J'$ where $pq = p'q'$.  On the other hand, if $\KCH(K) \cong \KCH(J)$ and $J$ is composite, then so is $K$.
\end{theorem}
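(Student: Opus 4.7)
My plan is to reduce both assertions to group-theoretic statements about the peripherally decorated knot group, meaning the triple $(\pi_1(S^3\setminus K),\mu_K,\lambda_K)$. The input I will use is that the variant $\KCH$ of knot contact homology admits a topological reformulation (via the cord algebra, cf.\ \cite{CELN,TopKCH}) which recovers exactly this data up to orientation-preserving homeomorphism of the exterior: an isomorphism $\KCH(K) \cong \KCH(J)$ yields an isomorphism $\pi_1(S^3\setminus K) \to \pi_1(S^3\setminus J)$ carrying $(\mu_K,\lambda_K)$ to $(\mu_J,\lambda_J)$ up to simultaneous conjugation. Granting this, I must show that (i) the decorated group of a composite knot forces the knot itself to be composite, and (ii) the decorated group of $C_{p,q}(J)$ forces the knot to be a $(p',q')$-cable with $p'q'=pq$.

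For (i), $J = J_1 \# J_2$ yields the splitting $\pi_1(S^3\setminus J) \cong \pi_1(S^3\setminus J_1) *_{\langle\mu_J\rangle} \pi_1(S^3\setminus J_2)$ over the cyclic meridional subgroup. Transport to $\pi_1(S^3\setminus K)$ gives the analogous nontrivial splitting over $\langle\mu_K\rangle$, and by the annulus theorem of Jaco--Shalen and Johannson this is realized by an essential annulus in the exterior of $K$ with meridional boundary components. Capping it off with two meridional disks in a neighborhood of $K$ produces a 2-sphere meeting $K$ transversely in two points, exhibiting a connected sum decomposition, so $K$ is composite.

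For (ii), the cabling torus in the exterior of $C_{p,q}(J)$ is essential and gives a nontrivial splitting of the knot group over a $\mathbb{Z}^2$ peripheral subgroup, one side being the cable space $C_{p,q}$ and the other the exterior of $J$. Transport and the torus decomposition theorem produce an essential torus $T' \subset S^3\setminus N(K)$; the piece it cuts off that contains $\partial N(K)$ must be a Seifert-fibered cable space, since the peripheral pair $(\mu_K,\lambda_K)$, transported from $(\mu_J,\lambda_J)$, sits in that piece in the same combinatorial way the cable's meridian and longitude sit in $C_{p,q}$. The invariant $pq$ is then recovered as the intersection number of $\lambda_K$ with the regular Seifert fiber, giving $p'q'=pq$. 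The main obstacle throughout is the cable case: I will need to identify the outermost JSJ piece as a cable space rather than a more general Seifert piece or knot exterior, and extract $pq$ canonically from the peripheral data. This relies on rigidity of Seifert fibrations with torus boundary together with the explicit peripheral structure of cable spaces, and is the most delicate step because neither $(p',q')$ nor $J'$ need match $(p,q)$ and $J$ individually.
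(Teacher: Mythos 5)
There is a genuine gap, and it occurs at the very first step. You assume that an isomorphism $\KCH(K) \cong \KCH(J)$ yields an isomorphism of knot groups $\pi_1(S^3\setminus K) \to \pi_1(S^3\setminus J)$ carrying $(\mu_K,\lambda_K)$ to $(\mu_J,\lambda_J)$ up to conjugation. No such reformulation is available. The theorem of Cieliebak--Ekholm--Latschev--Ng (Theorem~\ref{thm:kch-iso}) identifies $\KCH(K)$ with the ring $\R_K$, a certain subring of $\Z[\pi_K]$ generated by $\Z[\pihat_K]$ and the elements $\gamma - \mu_K\gamma$; an isomorphism of these rings does not, by any known argument, produce an isomorphism of the underlying groups preserving peripheral data. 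Indeed, if your assumed input were true, then by Waldhausen's theorem (knot exteriors are Haken and determined by their peripherally decorated fundamental groups) knot contact homology would be a complete knot invariant --- which the paper explicitly states is an open problem. So the hypothesis you grant yourself is at least as strong as the open question, and everything downstream (the annulus theorem argument for composites, the JSJ argument for cables) rests on it.

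The actual difficulty, and the content of the paper's proof, is to extract topological conclusions directly from a ring isomorphism $\R_{C_{p,q}(J)} \cong \R_K$ without ever recovering the group. The paper does this by transporting the commutation relation $[\mu^{pq}\lambda, (1-\mu)g]=0$ (where $g$ is a non-peripheral element commuting with the cabling slope) across the isomorphism, writing the image as $z + (1-\mu)w$ with $w$ supported on non-peripheral group elements, and then using that $\Z[\pi_K]$ has no zero-divisors (local indicability of knot groups plus Higman's theorem) to deduce that some power of $\mu^{pq}\lambda$ commutes with a genuinely non-peripheral element of $\pi_K$. Only then does a topological classification result enter --- Simon's theorem (Theorem~\ref{thm:peripheral-commutes}), which plays the role your JSJ/annulus arguments were meant to play, and which pins down both the cable/composite dichotomy and the equality $p'q'=pq$ from the specific peripheral element $\mu^{pq}\lambda$. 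Your final paragraphs correctly identify the geometric facts one needs at the end, but without the zero-divisor argument there is no way to get from the ring isomorphism to a commuting pair in the group, and the proposal as written does not close that gap.
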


The key starting point for Theorem~\ref{thm:main} is a forthcoming theorem of Cieliebak, Ekholm, Latschev, and Ng which relates $\KCH$ to the knot group \cite{CELN}.  (A sketch is given in \cite[Theorem 4.9]{TopKCH}.)  We first set our notation before stating their theorem.  Since degree zero knot contact homology detects the unknot \cite[Proposition 5.10]{FKCH}, we will assume throughout that all knots are oriented and non-trivial to simplify the discussion.  (See Remark~\ref{rmk:unknot} for a discussion of the unknot.)  Note that the orientation induces canonical representatives for the Seifert framing and the meridian, $\lambda_K$ and $\mu_K$ respectively, in the knot group $\pi_K = \pi_1(S^3 \setminus K)$.  We let $\pihat_K$ denote the peripheral subgroup of $\pi_K$, i.e., the subgroup generated by $\mu_K$ and $\lambda_K$.  Of course $\mathbb{Z}[\pi_K]$ contains $\Z[\pihat_K] \cong \Z[\mu^{\pm 1}_K, \lambda_K^{\pm 1}]$ as a subring.  Finally, let $\mathfrak{R}_K$ be the subring of $\mathbb{Z}[\pi_K]$ generated by $\mathbb{Z}[\pihat_K]$ and $\{\gamma - \mu_K \gamma \mid \gamma \in \pi_K\}$.  

Degree zero knot contact homology, $\KCH(K)$,  takes the form of a ring equipped with an embedding of $\Z[\mu^{\pm 1}, \lambda^{\pm 1}]$ into $\KCH(K)$.  An isomorphism between $\KCH(K)$ and $\KCH(K')$ is a ring isomorphism which restricts to the identify on the subrings $\Z[\mu^{\pm 1}, \lambda^{\pm 1}]$.  We are now ready to state a powerful relationship between $\KCH(K)$ and $\pi_K$.                   

\begin{theorem}[Cieliebak-Ekholm-Latschev-Ng, \cite{CELN}]\label{thm:kch-iso}
Let $K$ be a non-trivial, oriented knot.  Then, there is an isomorphism between $\KCH(K)$ and $\R_K$ which sends $\mu$, $\lambda$ to $\mu_K$, $\lambda_K$ respectively.  
\end{theorem}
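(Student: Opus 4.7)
The plan is to reduce the statement to an algebraic identification of two subrings of $\mathbb{Z}[\pi_K]$, using the cord algebra presentation of $\KCH(K)$ from Ng's earlier work. In that presentation $\KCH(K)$ is generated by $\mu^{\pm 1},\lambda^{\pm 1}$ together with homotopy classes of cords (paths in $S^3\setminus K$ with endpoints on $K$), modulo a skein-type relation that records how a cord breaks when it crosses $K$.

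With this presentation in hand, I would define a candidate homomorphism $\Phi:\KCH(K)\to \R_K$ that is the identity on the peripheral generators and sends a cord $c$ to $\bar c-\mu_K\bar c$, where $\bar c\in\pi_K$ is the loop obtained by closing $c$ up along the oriented knot. The core algebraic check is that Ng's schematic skein product, which at a common endpoint takes the form $[c_1\cdot c_2]-[c_1\cdot\mu\cdot c_2]$, is carried by $\Phi$ to the expansion
\[
(\bar c_1-\mu_K\bar c_1)(\bar c_2-\mu_K\bar c_2)=\bar c_1\bar c_2-\mu_K\bar c_1\bar c_2-\bar c_1\mu_K\bar c_2+\mu_K\bar c_1\mu_K\bar c_2,
\]
which matches term by term. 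Surjectivity onto $\R_K$ is then tautological: the image contains $\mathbb{Z}[\pihat_K]$ via the peripheral generators and every element $\gamma-\mu_K\gamma$ by construction, which are exactly the defining generators of $\R_K$.

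The main obstacle is injectivity. One must show that Ng's cord relations already exhaust the kernel of the tautological map from the free ring on cords and peripheral generators into $\mathbb{Z}[\pi_K]$, so that no extra identities in $\mathbb{Z}[\pi_K]$ cause additional collapse in $\R_K$. I would attempt this by constructing an explicit inverse: given $r\in\R_K$, write $r=p+\sum_i a_i(\gamma_i-\mu_K\gamma_i)$ with $p\in\mathbb{Z}[\pihat_K]$, lift each summand to a cord, and verify the lift is well defined modulo cord homotopy. Organising the verification by induction on word length in a Wirtinger presentation of $\pi_K$, and matching each Wirtinger relator to a cord skein move, looks like the cleanest combinatorial route; the forthcoming string-topology approach of \cite{CELN} is what promotes this matching to something canonical and intrinsically geometric, rather than merely formal.
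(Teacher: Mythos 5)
First, a point of comparison that matters here: the paper does not prove Theorem~\ref{thm:kch-iso} at all. It is imported from the forthcoming work \cite{CELN} (with a pointer to the sketch in \cite[Theorem 4.9]{TopKCH}) and used as a black box. So there is no internal proof to measure your argument against; what I can do is assess your sketch against what the cited proof actually requires. The first half of your proposal is sound and is indeed the standard setup: using Ng's cord-algebra presentation of $\KCH(K)$ (itself a citable theorem from \cite{FKCH}, not something to reprove here), the assignment $c \mapsto \bar c - \mu_K \bar c = (1-\mu_K)\bar c$ does carry the skein product to the product in $\mathbb{Z}[\pi_K]$, exactly as your four-term expansion shows, and surjectivity onto $\R_K$ is immediate since the image contains $\mathbb{Z}[\pihat_K]$ and all elements $\gamma - \mu_K\gamma$. (One detail you elide: a cord only determines $\bar c \in \pi_K$ after a choice of basepoint and of arcs along $K$ joining the endpoints to it, and the peripheral relations of the cord algebra, e.g.\ $[\mu\gamma] = \mu[\gamma]$, exist precisely to absorb that ambiguity; this needs to be checked alongside the skein relation.)

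The genuine gap is injectivity, and your proposed route to it does not work as stated. The decomposition $r = p + \sum_i a_i(\gamma_i - \mu_K\gamma_i)$ with $p \in \mathbb{Z}[\pihat_K]$ is far from unique --- already $\mu_K - \mu_K^2$ lies in $\mathbb{Z}[\pihat_K]$ and is also $\gamma - \mu_K\gamma$ for $\gamma = \mu_K$, and for general $r$ the two kinds of terms can be traded against each other in many ways --- so ``lift each summand to a cord'' does not define a map until one has already proved that all such tradings are consequences of the cord relations. But that is exactly the content of the theorem: one must show that every relation holding among the elements $(1-\mu_K)\gamma$ and $\mathbb{Z}[\pihat_K]$ inside $\mathbb{Z}[\pi_K]$ is generated by the skein and peripheral relations. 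Your suggestion of an induction on Wirtinger word length names a strategy but supplies no mechanism for it, and the final sentence defers the key step back to \cite{CELN}. In the actual proof this is where the work lives: Cieliebak--Ekholm--Latschev--Ng interpose degree zero string homology, computing it on one side to be $\R_K$ by a Mayer--Vietoris/normal-form argument and identifying it on the other side with $\KCH(K)$, rather than building a combinatorial inverse out of a Wirtinger presentation. So the proposal correctly identifies the shape of the statement and the easy half, but the half that makes the theorem true is not established.
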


Theorems~\ref{thm:main} and \ref{thm:cable-composite} will follow easily from Theorem~\ref{thm:kch-iso}, when we combine this with the fact that knot groups are locally indicable, which follows from \cite[Lemma 2]{HowieShort}, and the classical result of Higman that group rings of locally indicable groups have no zero-divisors \cite{Higman}.

\begin{remark}\label{rmk:unknot}
The degree zero knot contact homology of the unknot is given by $\mathbb{Z}[\mu^{\pm 1},\lambda^{\pm 1}]/(1-\mu)(1-\lambda)$.  It is pointed out in \cite{CELN} that since $\KCH(U)$ has zero-divisors and $\KCH(K)$ has no zero-divisors for non-trivial $K$ by Theorem~\ref{thm:kch-iso}, we obtain an alternate proof that degree zero knot contact homology detects the unknot.   
\end{remark}

\begin{proof}[Proof of Theorem~\ref{thm:main}]
As discussed, degree zero knot contact homology detects the unknot, so we suppose throughout that $K$ is non-trivial.    Let $T = \Tpq$ and suppose that $\KCH(K) \cong \KCH(T)$.  First, we show that $K$ must be equivalent to $T_{p',q'}$ where $p'q' = pq$.    By Theorem~\ref{thm:kch-iso}, we have a ring isomorphism $\psi$ between $\R_T$ and $\R_K$ which sends $\mu_{T}, \lambda_{T}$ to $\mu_K, \lambda_K$ respectively.  We will focus in particular on the elements $\phi_K = \mu^{pq}_K\lambda_K$ and $\phi_T = \mu^{pq}_T \lambda_T$, which are identified via $\psi$.

Recall that $\pi_T$ has non-trivial center, isomorphic to $\langle \phi_T \rangle$, since this is the fiber slope coming from the Seifert structure on the exterior of $T$.  On the other hand, by work of Burde-Zieschang \cite{Center}, if $K$ is not a torus knot then $\pi_K$ has trivial center.  Therefore, if $K$ is not isotopic to $T_{p',q'}$ with $pq = p'q'$, we have that $\phi_K$ is not central in $\pi_K$.  Thus, there exists $\gamma \in \pi_K$ such that $\gamma \phi_K \neq \phi_K \gamma$.  However, since $\KCH(K) \cong \KCH(\Tpq)$, we have that $\phi_K$ is central in $\R_K$.  Thus, $\phi_K (\gamma - \mu_K \gamma) = (\gamma - \mu_K \gamma) \phi_K$.  In $\Z[\pi_K]$, we rearrange to obtain
\begin{align*}
\phi_K \gamma - \gamma \phi_K &= \phi_K \mu_K \gamma - \mu_K \gamma \phi_K \\
&= \mu_K (\phi_K \gamma - \gamma \phi_K),   
\end{align*}     
where the second equality comes from the fact that $\mu_K$ and $\phi_K$ commute, being elements of $\pihat_K \cong \mathbb{Z}^2$.  Again, we rearrange to obtain, in $\Z[\pi_K]$, the equality $(\mu_K-1) (\phi_K \gamma - \gamma \phi_K) = 0$.  
Note that $\mu_K \neq 1$, since $\mathbb{Z}[\pi_K]$, as an abelian group, is freely generated by the elements of $\pi_K$.  Consequently, $\mathbb{Z}[\pi_K]$ has zero-divisors, since $\phi_K \gamma \neq \gamma \phi_K$.  As discussed above, $\pi_K$ is locally indicable, and therefore $\mathbb{Z}[\pi_K]$ has no zero-divisors.  This is a contradiction.  It therefore follows that $K = T_{p',q'}$, where $p'q' = pq$.  In particular, we point out that $K$ is not isotopic to $T_{-p,q}$.   

It remains to show degree zero knot contact homology distinguishes $T_{p',q'}$ from $\Tpq$ where $p'q' = pq$, but $\pm \{p,q\} \neq \{p',q'\}$.  If $\Tpq$ and $T_{p',q'}$ had isomorphic degree zero knot contact homology, then they would have the same stable $A$-polynomial $\widetilde{A}_K(\mu,\lambda)$ by \cite[Corollary 1.5]{Cornwell2} and \cite{FKCH}.  However, Cornwell computes $\widetilde{A}_{\Tpq}$ in \cite[Theorem 1.3]{Cornwell}; after normalizing this polynomial so that it is not divsible by $\mu$ or $\lambda$, the smallest degree non-constant term of $\widetilde{A}_{\Tpq}$ has exponent $|p||q| - |q| + 1$.  Since $pq = p'q'$, by assumption, the result now follows.  
\end{proof}

In order to detect cables and composite knots, we recall the result of Simon \cite{Simon}, generalizing the characterization of torus knots in terms of centralizers; this states that if there exists a non-trivial element $v \in \pihat_K$ and $g \in \pi_K \setminus \pihat_K$ with $vg = gv$, then $K$ is either a cable or a composite knot.  In fact, the proof in \cite{Simon} yields: 

\begin{theorem}\label{thm:peripheral-commutes}
Suppose a non-trivial element $v \in \pihat_K$ commutes with $g \in \pi_K \setminus \pihat_K$.  Then, either:
\begin{enumerate}
\item $K$ is composite and $v$ is a power of $\mu$, or 
\item $K$ is a $(p,q)$-cable and $v$ is a power of $\mu^{pq} \lambda$.  
\end{enumerate}
\end{theorem}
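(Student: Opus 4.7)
The plan is to combine Simon's theorem from \cite{Simon} with the JSJ decomposition of the exterior $E_K = S^3 \setminus \nu(K)$, using the latter to pin down the slope of $v$ after the former delivers the qualitative classification of $K$. Since a $(p,q)$-torus knot is the $(p,q)$-cable of the unknot, Simon's theorem immediately gives that $K$ is either composite or a cable, and the remaining task is to identify $v$ up to power.

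First, I would let $N$ denote the JSJ piece of $E_K$ containing $\partial E_K$ and argue that $N$ must be Seifert fibered, with $v$ a power of the regular fiber on $\partial E_K$. If $N$ were hyperbolic, then $v$ would be a parabolic element whose centralizer in $\pi_1(N)$ is exactly $\pihat_K$; since distinct maximal parabolic subgroups in a Kleinian group meet trivially, $v$ would not lie in any JSJ edge subgroup, and a Bass--Serre argument on the JSJ graph of groups would give $C_{\pi_K}(v) = C_{\pi_1(N)}(v) = \pihat_K$, contradicting the existence of $g$. Hence $N$ is Seifert fibered. In a Seifert piece over an orientable base orbifold, the centralizer in $\pi_1(N)$ of a non-trivial peripheral element equals $\pihat_K$ unless that element is a power of the regular fiber (in which case it is central), so the hypothesis $g \in C(v) \setminus \pihat_K$ forces $v$ to be a power of the regular fiber of $N$ on $\partial E_K$.

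The final step is to read off this fiber slope in each case. If $K = C_{p,q}(J)$ is a cable, then $N$ is a cable space (or all of $E_K$ for a torus knot), and a standard computation using the Seifert fibration of the $(p,q)$-torus knot exterior in a solid torus identifies the regular fiber on $\partial E_K$ as $\mu^{pq}\lambda$. If $K$ is composite with $n \geq 2$ prime summands, then $N$ is homeomorphic to an $(n{+}1)$-holed sphere crossed with $S^1$, and the $S^1$-factor restricted to $\partial E_K$ represents $\mu$. This yields $v$ as a power of $\mu^{pq}\lambda$ or of $\mu$, respectively.

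The hardest step will be the Bass--Serre bookkeeping for the centralizer, ruling out that $C_{\pi_K}(v)$ picks up extra elements from an adjacent JSJ piece when $N$ is hyperbolic; once that is secured, the slope identification is a routine reading of the standard Seifert fibrations of cable spaces and of pair-of-pants $\times\, S^1$ pieces.
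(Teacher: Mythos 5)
Your argument is essentially correct, but it takes a genuinely different route from the paper, which gives no proof of its own: it simply records that the statement is what the argument in \cite{Simon} actually establishes. Simon's proof is pre-geometrization: the commuting pair $(v,g)$ yields an essential singular annulus in $E_K$ with boundary on $\partial E_K$ of slope the primitive root of $v$; an annulus theorem promotes this to an embedded essential annulus, and the classification of essential annuli in knot exteriors forces the boundary slope to be meridional (composite case) or the cabling slope $pq$ (cable and torus case), which is exactly the refinement in the statement. Your proof reaches the same slope identification by exhibiting $v$ as a power of the regular fiber of the Seifert-fibered root piece of the JSJ decomposition, at the cost of invoking Thurston's hyperbolization of Haken manifolds to get the hyperbolic/Seifert dichotomy for that piece; what this buys is a sharper structural output (you actually compute $C_{\pi_K}(v)$ in all cases, and the method adapts to links and to other peripheral questions), whereas Simon's route is lighter on machinery and is the one the paper leans on.

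One step you should make explicit. In the Seifert-fibered case you compute the centralizer of $v$ only inside $\pi_1(N)$ and then conclude from the existence of $g$; but a priori $g$ need not lie in $\pi_1(N)$ when $E_K$ has several JSJ pieces (which is exactly the cable-of-a-nontrivial-knot and composite cases). The repair is the same Bass--Serre argument you already run in the hyperbolic case: if $v$ is not a power of the regular fiber, its image in the base orbifold group is a nontrivial power of the outer boundary class, which is not conjugate into the cyclic subgroups carried by the interior boundary tori of $N$; hence $v$ lies in no conjugate of an edge group incident to the vertex of $N$, its fixed-point set in the Bass--Serre tree is that single vertex, and therefore $C_{\pi_K}(v) \subseteq \pi_1(N)$, after which your centralizer computation applies and forces $g \in \pihat_K$, a contradiction. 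With that sentence inserted, the proof is complete.
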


With this we are now able to prove Theorem~\ref{thm:cable-composite}.  
\begin{proof}[Proof of Theorem~\ref{thm:cable-composite}]
For notational simplicity, we give only the argument for cabled knots.  In light of Theorem~\ref{thm:peripheral-commutes}, it will be clear that the same argument applies for composite knots as well.  

As before, we may assume that $K$ is a non-trivial knot.  Suppose that $K$ is not a cabled knot but $\KCH(K) \cong \KCH(C_{p,q}(J))$ for some knot $J$.  Let $g \in \pi_{C_{p,q}(J)}$ be a non-peripheral element which commutes with $\mu^{pq} \lambda$.  In $\KCH(C_{p,q}(J))$, we see that $\mu^{pq}\lambda$ commutes with $(1 - \mu) g$.  Let $\psi: \KCH(C_{p,q}(J)) \to \KCH(K)$ denote the isomorphism.  Note that we can write $\psi((1-\mu)g)$ as $z + (1-\mu)w$, where $z \in \Z[\pihat_K]$ and $w = \sum^n_{i = 1} a_i w_i$, where $a_i$ are non-zero integers and $w_i$ are distinct elements in $\pi_K \setminus \pihat_K$.  Of course $z$ and $\mu^{pq}\lambda$ commute, so we see that $\mu^{pq}\lambda$ and $(1-\mu)w$ commute in $\KCH(K)$ and thus in $\Z[\pi_K]$.  As in the proof of Theorem~\ref{thm:main}, since $(1-\mu) \neq 0$, we have 
\[
\sum^n_{i=1} a_i (\mu^{pq}\lambda w_i - w_i \mu^{pq}\lambda) = 0.  
\]
Note in particular that for each $i$, $\mu^{pq} \lambda w_i = w_j \mu^{pq} \lambda$ for some $j$.  It follows that there exists $k > 0$ such that $(\mu^{pq} \lambda)^k w_1 = w_1 (\mu^{pq} \lambda)^k$.  Since $w_1$ is not peripheral, it now follows from Theorem~\ref{thm:peripheral-commutes} that $K$ is a $(p',q')$-cable with $p'q' = pq$.       
\end{proof}

It is an interesting problem to try to determine if $\KCH(K) \cong \KCH(C_{p,q}(J))$ implies that $K$ must in fact be a $(p,q)$-cable of a knot $J'$ with $\KCH(J') \cong \KCH(J)$.  A similar question exists for composite knots.  
  

\begin{remark}
In fact, the proofs of Theorems~\ref{thm:main} and \ref{thm:cable-composite} use a weaker property than local indicability of the knot group.  The proofs only use that left-multiplication by $(1-\mu)$ is an injection on the group ring of any knot group.  That this property holds can be seen by a similar argument as at the end of the proof of Theorem~\ref{thm:cable-composite}.
\end{remark}

\section*{Acknowledgments} We would like to thank Robert Lipshitz for introducing us to this problem and for his encouragement.  We would also like to thank Chris Cornwell and Lenny Ng for help with knot contact homology and comments on an earlier version of this note.  

\bibliographystyle{alpha}
\bibliography{biblio}

\end{document}